\newtheorem{theorem}{Theorem}[section]
\newtheorem{lemma}[theorem]{Lemma}
\numberwithin{equation}{section}
\theoremstyle{remark}
\newcommand{\Ric}{\mathop{\mathrm{Ric}}}
\newcommand{\dist}{\mathop{\mathrm{dist}}\nolimits}
\title{Gradient estimates for the heat equation under the Ricci-Harmonic Map flow}
\author{Mihai B\u{a}ile\c{s}teanu}
\thanks{Department of Mathematics, University of Rochester, 801 Hylan Bld, Rochester, NY 14627, USA \texttt{mbailest@z.rochester.edu} \\}
\begin{document}

\begin{abstract}
The paper establishes a series of gradient estimates for positive solutions to the heat equation  on a manifold $M$ evolving under the Ricci flow, coupled with the harmonic map flow between $M$ and a second manifold $N$. We prove Li-Yau type Harnack inequalities and we consider the cases when $M$ is a complete manifold without boundary and when $M$ is compact, without boundary.  
\end{abstract}

\maketitle

\section{Introduction}

The present article focuses on the behaviour of positive solutions to the heat equation on a Riemannian manifold $M$, whose metric evolves under the Ricci flow coupled with the harmonic map flow of a map $\phi$ from $M$ to another manifold $N$. Our goal is to establish a series of Li-Yau type gradient estimates, which naturally lead to Harnack inequalities. The results are both of local and global nature, as we are studying the case when $M$ is complete without boundary and the case when $M$ is compact without boundary.   

Given two Rimeannian manifolds $(M,g)$ and $(N,\gamma)$ and a map $\phi:M\to N$, the Ricci-harmonic map flow is the coupled system of the Ricci flow with the harmonic map flow of $\phi$: \begin{equation}\label{RH_flow_intro}
\begin{cases}\frac\partial{\partial t} g(x,t)=-2\Ric(x,t)+2\alpha(t)\nabla\phi(x,t)\otimes\nabla\phi(x,t)\\
\frac{\partial}{\partial t}\phi(x,t)=\tau_g\phi(x,t)
\end{cases}
 \end{equation} 
where $\alpha$ is a positive coupling time-dependent function. $\tau_g\phi$ denotes the tension field of the map $\phi$ with respect to the metric $g(t)$. We will refer to it as the $(RH)_\alpha$ flow, for short and we call $(g(x,t), \phi(x,t))$ with $t\in[0,T]$ a solution to this flow.  It is interesting that it may be less singular than both the Ricci flow (to which it reduces when $\alpha(t)=0$) and the harmonic map flow. We assume that the curvature of $M$ remains bounded for all $t\in [0,T]$. We also consider a function $u:M\times [0,T]\to (0,\infty)$ which solves the heat equation
\begin{equation}\label{heat_intro}
\left(\triangle-\frac{\partial}{\partial t}\right)u(x,t)=0,\, x\in M,\, t\in[0,T].
\end{equation} 

One can give a simple physical interpretation of problem \eqref{RH_flow_intro} combined with \eqref{heat_intro}: the manifold $M$ with initial metric $g(x,0)$ can be thought as an object having the temperature distribution $u(x,0)$. Then, simultaneously, $M$ starts evolving under the $(RH)_\alpha$ flow, while heat is spreading onto it. The solution $u(x,t)$ represents the temperature at time $t$ and point $x$. 

The $(RH)_\alpha$ flow was introduced in \cite{RM12}, where the author proved short time existence and studied energy and entropy functionals, existence of singularities, a local non-collapsing property etc. Moreover, a version of this flow appeared earlier in the work of List \cite{BL08}, where the case of $\phi$ being a scalar function and $\alpha=2$ was analyzed, and where it was shown to be equivalent to the gradient flow of an entropy functional, whose stationary points are solutions to the static Einstein vacuum equations. 

The $(RH)_\alpha$ flow also arises when one studies Ricci flow on warped product spaces. Given a warped product metric $g_M=g_N+e^{2\phi}g_F$ on a manifold $M=N\times F$ (where $\phi\in C^{\infty}(N)$), the Ricci flow equation on $M$ $\frac{\partial g_M}{\partial t}=-2\Ric_M$ leads to the following equations on each component: \[\begin{cases}
\frac{\partial g_N}{\partial t}=-2\Ric_N+2m\ d\phi\otimes\ d\phi \\
\frac{\partial \phi}{\partial t}=\triangle \phi-\mu e^{-2\phi}                                                                                                                                                                                                                                                                                                                                                                                                                                                                                                                                                                                                                                                       \end{cases}\] 
if the fibers $F$ are $m$-dimensional and $\mu$-Einstein. This is a particular version of the $(RH)_\alpha$, where the target manifold is one dimensional, and has been studied by M. B. Williams in \cite{MBW13} and by H. Tran in \cite{HT12} (when $\mu=0$).  

The scalar curvature of a manifold evolving under the $(RH)_\alpha$ flow satisfies the heat equation with a potential (depending on the Ricci curvature of $M$, the map $\phi$ and the Riemann curvature tensor of $N$), so in order to understand the behavior of the metric under the $(RH)_\alpha$ flow one needs to study the heat equation. 

The study of the heat equation under the Ricci flow started with R. Hamilton's paper \cite{RH95}, and later it was pursued in \cite{QZ06,LN04,CG02,XCRH09,RH95}. Most recently, gradient estimates for the heat equation under the Ricci flow were analyzed in \cite{MBXCAP09}, \cite{SL09} and \cite{JS11}.

The main two results of this paper are the following two theorems.

\begin{theorem}\label{thm_sp-tm-global}
Let $\big(g(x,t),\phi(x,t)\big)_{t\in[0,T]}$ is a solution to the $(RH)_\alpha$
flow \eqref{RH_flow_intro}, where the manifold $M^n$ is compact and $\alpha(t)$ is a non-increasing function in time, bounded from below by $\bar\alpha$. Assume that $0\le\Ric(x,t)\leq kg(x,t)$ for some $k>0$ and all $(x,t)\in M\times[0,T]$. Moreover suppose that $0\leq\nabla_i\phi\nabla_j\phi\leq \frac{C}{t}g_{ij}$, for all $i,j\in\{1,2,...,n\}$ ($C$ being a constant depending on $n$ and $\bar\alpha$).  Let $u$ be a smooth
positive function $u:M\times[0,T]\to\mathbb R$ satisfying the heat
equation $u_t=\triangle u$. The estimate
\begin{align}\label{sp-tm-glob} \frac{|\nabla
u|^2}{u^2}-\frac{u_t}{u}\leq kn + {C_n\over 2 t}
\end{align} holds for all $(x,t)\in M\times(0,T]$, where $C_n=n/2+4nC\alpha(0)$.
\end{theorem}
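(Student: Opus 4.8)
The plan is to adapt the classical Li--Yau computation --- in the form used for the heat equation under the Ricci flow in \cite{MBXCAP09} --- to the coupled $(RH)_\alpha$ flow, and to close the argument with the maximum principle on the compact manifold $M$. Put $f=\log u$; since $u>0$ solves $u_t=\Delta u$, the function $f$ satisfies $f_t=\Delta f+|\nabla f|^2$, so the left side of \eqref{sp-tm-glob} equals $w:=|\nabla f|^2-f_t=-\Delta f$. It then suffices to bound $P:=tw$ and to show $P\le knt+C_n/2$ on $M\times(0,T]$.

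First I would derive the differential inequality satisfied by $w$ (hence by $P$) along the flow. Three ingredients enter: the Bochner formula on $(M,g(t))$, which --- after using $\Delta f=-w$ --- contributes $2|\nabla^2 f|^2-2\langle\nabla f,\nabla w\rangle+2\Ric(\nabla f,\nabla f)$; the metric variation $\partial_t g^{ij}=2S^{ij}$ with $S_{ij}:=R_{ij}-\alpha\nabla_i\phi\nabla_j\phi$, which produces a term $2\langle S,\nabla^2 f\rangle$; and the commutator $(\partial_t\Delta)f=2S^{ij}\nabla_i\nabla_j f+(2\,\mathrm{div}\,S-\nabla S)\cdot\nabla f$, where the contracted second Bianchi identity for the $(RH)_\alpha$ flow gives $2\,\mathrm{div}\,S-\nabla S=-2\alpha(\tau_g\phi,\nabla\phi)$ with $\tau_g\phi=\partial_t\phi$. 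Collecting terms,
\[(\Delta-\partial_t)w=2|\nabla^2 f|^2-2\langle\nabla f,\nabla w\rangle+2\Ric(\nabla f,\nabla f)+2\langle S,\nabla^2 f\rangle-2\alpha(\tau_g\phi,\nabla^k\phi)\nabla_k f.\]

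Next I would estimate the right-hand side. One uses $|\nabla^2 f|^2\ge\frac1n(\Delta f)^2=\frac1n w^2$, discards $2\Ric(\nabla f,\nabla f)\ge0$ (here the hypothesis $\Ric\ge0$ is used), and controls the two mixed terms: split $\langle S,\nabla^2 f\rangle=\langle\Ric,\nabla^2 f\rangle-\alpha\langle\nabla\phi\otimes\nabla\phi,\nabla^2 f\rangle$ and bound each factor by Cauchy--Schwarz and Young, invoking $\Ric\le kg$, the hypothesis $\nabla_i\phi\nabla_j\phi\le\frac Ct g_{ij}$ (so $|\nabla\phi|^2\le\frac{nC}{t}$), and $\alpha(t)\le\alpha(0)$; the tension-field term is treated the same way, using in addition a companion bound $|\tau_g\phi|\lesssim t^{-1/2}$ obtained from the curvature hypotheses and the monotonicity of $\alpha$ by a M\"uller-type maximum principle argument. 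Passing to $P=tw$ through $(\Delta-\partial_t)P=t(\Delta-\partial_t)w-w$ then yields a differential inequality of the shape $(\Delta-\partial_t)P\ge\frac{c}{nt}P^2-2\langle\nabla f,\nabla P\rangle-\frac Pt-at-\frac bt$, with $c>0$ fixed, $a=a(k,n)$ and $b=b(n,C,\alpha(0))$.

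To finish, fix $T'\in(0,T]$ and let $(x_0,t_0)$ maximize $P$ on the compact set $M\times[0,T']$. If $P(x_0,t_0)\le0$ then $w\le0$ and \eqref{sp-tm-glob} holds trivially; otherwise $t_0>0$, since $P\equiv0$ at $t=0$, and at $(x_0,t_0)$ one has $\nabla P=0$, $\Delta P\le0$, $\partial_t P\ge0$, hence $(\Delta-\partial_t)P\le0$ there. Substituting into the differential inequality and multiplying by $t_0$ produces a quadratic inequality for $P(x_0,t_0)$; solving it and using $t_0\le T'$ gives $P(x_0,t_0)\le knT'+C_n/2$ with $C_n=n/2+4nC\alpha(0)$. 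Since $(x,T')$ is arbitrary, $w=P/t\le kn+C_n/(2t)$ throughout $M\times(0,T]$, which is \eqref{sp-tm-glob}. I expect the main obstacle to be precisely the control of the new harmonic-map terms $\langle\nabla\phi\otimes\nabla\phi,\nabla^2 f\rangle$ and $(\tau_g\phi,\nabla\phi)\cdot\nabla f$ without spoiling the $t^{-1}$ blow-up rate: this is exactly where the quantitative bound $\nabla_i\phi\nabla_j\phi\le\frac Ct g_{ij}$ (with $C$ tied to $n$ and $\bar\alpha$) and the monotonicity of $\alpha$ are indispensable, and extracting the sharp constant $C_n=n/2+4nC\alpha(0)$ requires careful bookkeeping in the Young-inequality steps.
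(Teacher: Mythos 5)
Your overall strategy is the paper's: set $f=\log u$, form the Li--Yau quantity $t\bigl(|\nabla f|^2-f_t\bigr)$ (your $P=tw$), derive a parabolic inequality for it along the flow, and close with the maximum principle at a space--time maximum on the compact set $M\times[0,\tau]$, solving the resulting quadratic inequality in $P$. The ingredients you list for the zeroth- and second-order terms ($|\nabla^2f|^2\ge\frac1n(\Delta f)^2$, discarding $\Ric(\nabla f,\nabla f)\ge0$, Cauchy--Schwarz/Young on $\langle \mathcal S,\nabla^2f\rangle$ using $\Ric\le kg$, $\nabla_i\phi\nabla_j\phi\le\frac{C}{t}g_{ij}$ and $\alpha(t)\le\alpha(0)$) are exactly those of the paper's Lemma~\ref{lemma_LY} specialized to $\beta=1$, $k_1=0$, $k_2=k$.

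The gap is in your treatment of the first-order commutator term $-2\alpha(\tau_g\phi,\nabla\phi)\cdot\nabla f$, i.e. of $(2\operatorname{div}\mathcal S-\nabla S)\cdot\nabla f$. First, the bound $|\tau_g\phi|\lesssim t^{-1/2}$ is asserted, not proved, and it does not follow from the hypotheses of the theorem: these control only $\Ric$ and the first-order quantity $\nabla\phi\otimes\nabla\phi$, whereas $\tau_g\phi$ is second order in $\phi$; an estimate of Bando--Shi/M\"uller type would need additional assumptions (curvature bounds on $N$, full curvature bounds on $M$) and would carry constants you must track if you want the stated $C_n$. Second, even granting such a bound, after Young's inequality this term leaves a positive multiple of $|\nabla f|^2$, and in the $\beta=1$, $\Ric\ge0$ case there is no negative $|\nabla f|^2$ term left to absorb it (those terms carry factors $k_1$ and $\beta-1$ and vanish here), while the good quadratic term $\frac{2at}{n}\bigl(|\nabla f|^2-f_t\bigr)^2=\frac{2at}{n}(\Delta f)^2$ does not dominate $|\nabla f|^2$, and at the maximum point $\nabla P=0$ gives no control on $\nabla f$. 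So the claim that ``careful bookkeeping'' still yields exactly $C_n=n/2+4nC\alpha(0)$ is unsubstantiated, and as written the maximum-principle step does not close. Note that the paper never confronts this: its proof of Lemma~\ref{lemma_LY} uses the identity $(\Delta f)_t=2S_{ij}\nabla_i\nabla_jf+\Delta(f_t)$ with no first-order correction, so no tension-field term ever appears. You should reconcile your evolution equation with that one --- either show the extra term is absent or harmless under the stated hypotheses, or supply a genuine proof of a tension-field bound together with an absorption scheme that works at $\beta=1$.
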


This estimate resembles the Li-Yau inequality from \cite{PLSTY86}, where the authors found that the solution to the heat equation \eqref{heat_intro} on a static manifold (with non-negative Ricci curvature) satisfies: 
\[\frac{|\nabla u|^2}{u^2}-\frac{u_t}{u}\leq \frac{n}{2t}\]
and the expression becomes equality when $u(x,t)$ is the heat kernel on the Euclidean space. 

The second result is local in nature and is achieved inside the ball $B_{\rho, T}=\{(x,t)\in M\times[0,T]|\dist(x,x_0,t)<\rho\}$, while requiring $M$ only to be complete:

\begin{theorem}\label{thm_sp-tm-local}
Let $\big(g(x,t),\phi(x,t)\big)_{t\in[0,T]}$ be a solution to the
$(RH)_\alpha$ flow~\eqref{RH_flow_intro}, where $M^n$ is complete and $\alpha(t)$ is a non-increasing function in time, bounded from below by $\bar\alpha$. Suppose $-k_1g(x,t)\le\Ric(x,t)\leq
k_2g(x,t)$ for some $k_1,k_2>0$ and all $(x,t)\in B_{\rho,T}$ and that $0\leq\nabla_i\phi\nabla_j\phi\leq \frac{C}{t}g_{ij}$, for all $i,j\in\{1,2,...,n\}$ ($C$ being a constant depending on $n$ and $\bar\alpha$). Consider a smooth positive function $u:M\times[0,T]\to\mathbb R$ solving the heat equation $\triangle u=u_t$. There exists a constant
$C'$ that depends on the dimension of $M$, $\alpha(0)$, and $C$ and satisfies the
estimate
\begin{align}\label{sp-tm-loc}
\frac{|\nabla u|^2}{u^2}-\beta\frac{u_t}u\leq C'\beta^2\cdot\left(\frac{\beta^2}{\rho(\beta-1)}+\frac{1}{t}+\max\{k_1,k_2\}\right)+\frac{n\beta k_1}{4(\beta-1)}
\end{align}
for all $\beta>1$ and all $(x,t)\in B_{\frac\rho2,T}$ with $t\ne0$.
\end{theorem}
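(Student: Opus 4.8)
The plan is to run the Li--Yau parabolic maximum-principle argument, adapted to the coupled $(RH)_\alpha$ flow and localized in both space and time. Write $f=\log u$, so that $\triangle u=u_t$ becomes $f_t=\triangle f+|\nabla f|^2$, and introduce the Li--Yau quantity
\[
F=t\bigl(|\nabla f|^2-\beta f_t\bigr)=-t\bigl((\beta-1)|\nabla f|^2+\beta\triangle f\bigr),
\]
so that $\triangle f=-F/(\beta t)-\tfrac{\beta-1}{\beta}|\nabla f|^2$. The first step is to derive a differential inequality for $F$. Because the metric moves, $(\triangle-\partial_t)|\nabla f|^2$ must be computed using the Bochner formula together with $\partial_t g^{ij}=2\Ric^{ij}-2\alpha(t)\nabla^i\phi\,\nabla^j\phi$ from \eqref{RH_flow_intro}; the two Ricci contributions cancel, and there survives a nonnegative term $2\alpha(t)\nabla^i\phi\,\nabla^j\phi\,\nabla_i f\,\nabla_j f$. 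Differentiating $f_t$ produces the commutator $[\triangle,\partial_t]$, which contributes $2\beta t\,\Ric(\nabla f,\nabla f)$ and $2\beta t\,\alpha(t)\,\nabla^i\phi\,\nabla^j\phi\,\nabla_i\nabla_j f$. Using $|\nabla^2 f|^2\ge\tfrac1n(\triangle f)^2$ to extract the coercive term $\tfrac{2t}{n}(\triangle f)^2=\tfrac{2}{n\beta^2 t}\bigl(F+(\beta-1)t|\nabla f|^2\bigr)^2$, one arrives at an inequality of the schematic form
\[
(\triangle-\partial_t)F\ \ge\ -2\langle\nabla f,\nabla F\rangle-\frac{F}{t}+\frac{2}{n\beta^2 t}\bigl(F+(\beta-1)t|\nabla f|^2\bigr)^2-\mathcal{E},
\]
where, by the two-sided Ricci bound and the hypothesis $0\le\nabla_i\phi\,\nabla_j\phi\le\tfrac{C}{t}g_{ij}$ together with $\alpha(t)\le\alpha(0)$, the error term satisfies $\mathcal{E}\le c\beta t\max\{k_1,k_2\}\bigl(|\nabla f|^2+|\nabla^2 f|\bigr)+c\,\alpha(0)C\bigl(|\nabla f|^2+|\nabla^2 f|\bigr)$ with $c=c(n)$; the displayed square is the engine of the estimate.

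Second, fix $x_0\in M$ and take a cutoff $\psi(x,t)=\eta\bigl(\dist(x,x_0,t)/\rho\bigr)$ with $\eta$ smooth, $\eta\equiv1$ on $[0,\tfrac12]$, vanishing outside $[0,1]$, and $-c\sqrt{\eta}\le\eta'\le0$, $\eta''\ge-c$. Then $|\nabla\psi|^2/\psi\le c/\rho^2$; the Laplacian comparison theorem with $\Ric\ge-k_1 g$ gives $\triangle\psi\ge-c(1+\sqrt{k_1}\,\rho)/\rho^2$; and the evolution equation for $\dist(\cdot,x_0,t)$ under \eqref{RH_flow_intro}, using $-k_1 g\le\Ric\le k_2 g$ and $\nabla_i\phi\,\nabla_j\phi\le\tfrac{C}{t}g_{ij}$, gives $|\partial_t\psi|\le c\bigl(k_1+k_2+\alpha(0)C/t+1/\rho^2\bigr)$, Calabi's trick handling the cut locus. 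Now apply the maximum principle to $\psi F$ on the compact set $\overline{B_{\rho,T}}$: if $\psi F\le0$ there, then \eqref{sp-tm-loc} is trivial on $B_{\rho/2,T}$, so assume $\psi F$ attains a positive maximum at $(x_1,t_1)$ with $t_1>0$. There $\nabla(\psi F)=0$, $\triangle(\psi F)\le0$, $\partial_t(\psi F)\ge0$, whence $\nabla F=-(F/\psi)\nabla\psi$ and
\[
0\ \ge\ \psi\,(\triangle-\partial_t)F+F\,(\triangle-\partial_t)\psi+2\langle\nabla\psi,\nabla F\rangle\ \Big|_{(x_1,t_1)}.
\]

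Third, substitute the differential inequality and the cutoff bounds into this, multiply through by $t_1$, and set $y=(\psi F)(x_1,t_1)>0$. Using $\nabla F=-(F/\psi)\nabla\psi$ and then Young's inequality to absorb the harmful terms — the cutoff term $|\nabla\psi|^2\psi^{-1}t|\nabla f|^2$, the curvature terms $\beta t\max\{k_1,k_2\}(|\nabla f|^2+|\nabla^2 f|)$, the mixed terms carrying $\partial_t\psi$, and the harmonic-map contributions — into the coercive $(\beta-1)^2 t|\nabla f|^4$ and $(\triangle f)^2$ parts, one is left with a quadratic inequality $0\ge\tfrac{a}{\beta^2}\,y^2-b\,y-d$ with $a=a(n)>0$, $b$ and $d$ being sums of multiples of $\rho$, $t_1$, $k_1$, $k_2$, $\beta$ and $C\alpha(0)$; completing the square on the surviving term $2\beta t\,\Ric(\nabla f,\nabla f)$ against the quartic is precisely what produces the additive $\tfrac{n\beta k_1}{4(\beta-1)}$. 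Solving the quadratic for $y$, dividing by $t$, and restricting to $B_{\rho/2,T}$, where $\psi\equiv1$, then gives \eqref{sp-tm-loc}, with $C'$ depending only on $n$, $\alpha(0)$ and $C$.

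The step I expect to be the main obstacle is this final one: one must keep the powers of $\beta$ and the factors $(\beta-1)$ in the denominators sharp while simultaneously absorbing the cutoff, curvature, and harmonic-map error terms into the quartic and Hessian-square terms coming from Bochner, and the choices of Young weights are exactly what fix the form of $C'$ and the additive $k_1$-term. A secondary point, genuinely new relative to the pure Ricci-flow case, is the estimate for $\partial_t\dist(\cdot,x_0,t)$ under the coupled flow, which is controlled by the curvature bounds and by $\nabla_i\phi\,\nabla_j\phi\le\tfrac{C}{t}g_{ij}$ — this is where, and why, that last hypothesis enters through the cutoff.
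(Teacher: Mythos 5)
Your outline follows the paper's own route: the same Li--Yau quantity $F=t\left(|\nabla f|^2-\beta f_t\right)$, a Bochner-type differential inequality adapted to \eqref{RH_flow_intro}, localization by a cutoff composed with the evolving distance (the paper's cutoff, Lemma~\ref{lem_cutoff}, also depends on $t$, but since $F$ vanishes at $t=0$ your purely spatial version is an inessential variant), Calabi's trick, the bound on $\partial_t\dist$ along minimal geodesics, and the same quadratic-inequality endgame, including the correct mechanism by which completing the square of the $k_1|\nabla f|^2$ term against $(\beta-1)^2|\nabla f|^4$ produces the additive $\frac{n\beta k_1}{4(\beta-1)}$. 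There is, however, one step that does not close as you have written it. In your displayed inequality the coercive term $\frac{2}{n\beta^2 t}\bigl(F+(\beta-1)t|\nabla f|^2\bigr)^2$ is exactly $\frac{2t}{n}(\triangle f)^2$, i.e.\ you have already spent the \emph{entire} Bochner term $2t|\nabla^2 f|^2$ via the trace inequality; yet your error $\mathcal{E}$ still contains contributions linear in $|\nabla^2 f|$ (coming from $\beta R_{ij}f_{ij}$ and $\beta\alpha\nabla_i\phi\nabla_j\phi\, f_{ij}$), which you propose to absorb by Young's inequality into the $(\triangle f)^2$ and $|\nabla f|^4$ parts. That absorption is not valid: the traceless part of the Hessian is not controlled by $\triangle f$ (nor by $|\nabla f|$), so at the maximum point these linear Hessian terms can dominate your coercive terms and the quadratic inequality in $\psi F$ never materializes.

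The repair is standard and is precisely what the paper's Lemma~\ref{lemma_LY} does: split $f_{ij}^2=(a+2b)\beta f_{ij}^2$ with $a+2b=\frac1\beta$, use the $2b\beta f_{ij}^2$ portion to complete squares against $\beta R_{ij}f_{ij}$ and $\beta\alpha\nabla_i\phi\nabla_j\phi f_{ij}$ --- leaving only the zeroth-order errors $\frac{\beta tn}{2b}\max\{k_1^2,k_2^2\}$ and $\frac{\beta\alpha^2(0)nC^2}{2bt}$ --- and only afterwards apply $f_{ij}^2\ge\frac{(\triangle f)^2}{n}$ to the remaining $a\beta f_{ij}^2$. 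The coercive coefficient is then $\frac{2a\beta t}{n}$ rather than $\frac{2t}{n}$, and the later choice $a=\frac1{2\beta}$, $b=\frac1{4\beta}$ is what fixes the $\beta$-powers in $C'$. With that correction your plan coincides with the paper's proof. Two smaller remarks: your bound $|\partial_t\psi|\le c\bigl(k_1+k_2+\alpha(0)C/t+1/\rho^2\bigr)$ correctly accounts for the $\alpha\nabla\phi\otimes\nabla\phi$ part of $\partial_t g$ in the evolution of the distance (the paper only invokes the Ricci bound there), but the hypothesis $0\le\nabla_i\phi\nabla_j\phi\le\frac{C}{t}g_{ij}$ enters the argument chiefly through the differential inequality for $F$, not through the cutoff; and when you quote Laplacian comparison you should make explicit that it is applied at the fixed time $t_1$ with the lower bound $\Ric\ge-k_1g$ on $B_{\rho,T}$.
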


The proof of the theorems are given in section \ref{global} and \ref{local}, respectively. We conclude the paper with section \ref{trei}, where we present the Harnack inequalities resulting from integrating the above gradient estimates over space-time paths.  

\section{Gradient estimates}\label{doi}

\subsection{Preliminaries}\label{prelim}

First, let us note that $M$ is a connected, oriented, smooth, $n$-dimensional Riemannian manifold, without boundary, with metric $g$. Similarly $N$ is also a smooth manifold, without boundary, of dimension $m$. We assume that $N$ is isometrically embedded into the Euclidean space $\mathbb{R}^d$ (which follows by Nash's embedding theorem) for large enough $d$, so one may write $\phi=(\phi^\mu)_{1\leq\mu\leq d}$. 

We will assume that $(\phi(x,t),g(x,t))$ is a solution to the $(RH)_\alpha$ flow \eqref{RH_flow_intro}, for $x\in M$ and $t\in[0,T]$, where $T<T_\epsilon$, where $T_\epsilon$ is the time when there is possibly a blowup in the metric. 

The tensor $\nabla\phi(x,t)\otimes\nabla\phi(x,t)$ has the following expression in local coordinates: $(\nabla\phi\otimes\nabla\phi)_{ij}=\nabla_i\phi^\mu\nabla_j\phi^\mu$ and the energy density of the map $\phi$ is given by $|\nabla\phi|^2=g^{ij}\nabla_i\phi^\mu\nabla_j\phi^\mu$, where we use the convention that repeated Latin indices are summed over from $1$ to $n$, while the Greek are summed from 1 to $d$. All the norms are taken with respect to the metric $g$ at time $t$. 

As we mentioned in the introduction, we will assume the most general condition for the coupling function $\alpha(t)$, as it appears in \cite{RM12}: it is a non-increasing function in time, bounded from below by $\bar\alpha>0$, at any time. 

Note that in \cite{RM12} (Proposition 5.5), the author proves that if $N$ has non-positive sectional curvature, then 
\[|\nabla\phi(x,t)|^2\leq \frac{n}{2\bar\alpha t},\, \forall (x,t)\in M\times(0,T]\]
where $n$ is the dimension of $M$ and $[0,T]$ is the interval in which the flow is defined ($T<T_e$, where $T_e$ is the moment when there is possibly a blowup). 

Both of our theorems hold under a slightly stronger assumption: $0\leq\nabla_i\phi\nabla_j\phi\leq \frac{C}{t}g_{ij}$, for all $i,j\in\{1,2,...,n\}$ ($C$ being a constant depending on $n$ and $\bar\alpha$), which immediately implies the above inequality. As a future endeavor, one may want to prove the same theorems without this assumption. 

Following the notation in \cite {RM12}, it will be easier to introduce these quantities: 
\begin{align*}
\mathcal{S}&:= \Ric-\alpha\nabla\phi\otimes\nabla\phi\\
S_{ij}&:=R_{ij}-\alpha\nabla_i\phi\nabla_j\phi\\
S&:=R-\alpha|\nabla\phi|^2
\end{align*}

Both proofs are based on the following lemma: 

\begin{lemma}\label{lemma_LY}
Suppose $\big(g(x,t),\phi(x,t)\big)_{t\in[0,T]}$ is a solution to
the $(RH)_\alpha$ flow \eqref{RH_flow_intro}. Assume that
$-k_1g(x,t)\le\Ric(x,t)\leq k_2g(x,t)$ for some $k_1,k_2>0$ and that $0\leq\nabla_i\phi\nabla_j\phi\leq \frac{C}{t}g_{ij}$, for all $i,j\in\{1,2,...,n\}$ ($C$ being a constant depending on $n$ and $\bar\alpha$). Suppose $u:M\times[0,T]\to\mathbb R$ is a
smooth positive function satisfying the heat
equation $u_t=\triangle u$. Given $\beta\ge1$, define $f=\log u$ and
$F=t\left(|\nabla f|^2-\beta f_t\right)$. The estimate
\begin{align}\label{est_lem_LY}
\left(\triangle-\frac{\partial}{\partial t}\right) F\geq &-2\nabla
f\nabla F+\frac{2a\beta t}{n}\left(|\nabla f|^2-f_t\right)^2 \notag \\ &-\left(|\nabla f|^2-\beta f_t\right)-2k_1\beta t|\nabla
f|^2-\frac{\beta tn}{2b}\max\left\{k_1^2,k_2^2\right\} \notag \\
 &-\frac{\beta\alpha^2(0)n}{2b}\frac{C^2}{t} - 2(\beta-1)\alpha(0) C|\nabla f|^2
\end{align} holds for
any $a,b>0$ such that $a+2b=\frac{1}{\beta}$.
\end{lemma}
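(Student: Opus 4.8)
The plan is to carry out a Li-Yau type Bochner computation for the quantity $F = t(|\nabla f|^2 - \beta f_t)$ where $f = \log u$. First I would record the evolution equation for $f$: since $u_t = \triangle u$ and $f = \log u$, a direct computation gives $f_t = \triangle f + |\nabla f|^2$. The heat operator $\triangle - \partial_t$ acting on $|\nabla f|^2$ will, via the Bochner formula $\triangle |\nabla f|^2 = 2|\nabla^2 f|^2 + 2\nabla f \cdot \nabla(\triangle f) + 2\Ric(\nabla f, \nabla f)$, produce a Hessian-squared term, a gradient-interaction term, and a Ricci term; crucially, because the metric is \emph{evolving}, differentiating $|\nabla f|^2 = g^{ij} f_i f_j$ in $t$ also generates a term $-\dot g^{ij} f_i f_j = 2S^{ij} f_i f_j = 2(\Ric - \alpha \nabla\phi\otimes\nabla\phi)^{ij} f_i f_j$ coming from $\frac{\partial}{\partial t} g^{ij} = 2S^{ij}$ under the $(RH)_\alpha$ flow. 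Similarly I need $(\triangle - \partial_t) f_t$; commuting $\partial_t$ with $\triangle$ on the time-dependent manifold is what introduces the extra curvature/$\nabla\phi$ contributions specific to this flow (as opposed to the classical static case).

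Next I would assemble these into $(\triangle - \partial_t) F = (\triangle - \partial_t)\big(t|\nabla f|^2 - \beta t f_t\big)$, picking up a $-\partial_t$ hitting the explicit factor of $t$, which yields the $-(|\nabla f|^2 - \beta f_t) = -F/t$ type term. The Hessian term $2t|\nabla^2 f|^2$ is then estimated from below by $\frac{2t}{n}(\triangle f)^2 = \frac{2t}{n}(f_t - |\nabla f|^2)^2 = \frac{2t}{n}(|\nabla f|^2 - f_t)^2$ via Cauchy-Schwarz. The gradient-interaction terms are combined into the $-2\nabla f \cdot \nabla F$ term by using $\nabla F = t\nabla(|\nabla f|^2 - \beta f_t)$. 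To extract the factor $a$ in front of the square term I would split the coefficient $2t$ of the Hessian term (or equivalently the $2t/n$ after Cauchy-Schwarz) as $a + 2b = 1/\beta$ times $\beta$, keeping $\frac{2a\beta t}{n}(|\nabla f|^2 - f_t)^2$ and using the leftover $\frac{2b \cdot (\ldots)}{n}$-portion to absorb, via Young's inequality, the cross terms arising from the Ricci contractions and the $\alpha\nabla\phi\otimes\nabla\phi$ contractions — this is where $\frac{\beta t n}{2b}\max\{k_1^2, k_2^2\}$ and $\frac{\beta \alpha^2(0) n}{2b}\frac{C^2}{t}$ come from, using $-k_1 g \le \Ric \le k_2 g$ and $0 \le \nabla_i\phi\nabla_j\phi \le \frac{C}{t} g_{ij}$ together with monotonicity of $\alpha$ so that $\alpha(t) \le \alpha(0)$.

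The remaining pieces are the genuinely linear-in-$|\nabla f|^2$ leftovers: the term $-2k_1\beta t |\nabla f|^2$ comes from bounding the Ricci term $2t\,\Ric(\nabla f,\nabla f) \ge -2k_1 t|\nabla f|^2$ appearing in Bochner, and the term $-2(\beta-1)\alpha(0)C|\nabla f|^2$ comes from the mismatch between the $|\nabla f|^2$ coefficient and the $\beta f_t$ coefficient in $F$ — when $\beta \ne 1$ the $\alpha\nabla\phi\otimes\nabla\phi$ contraction against $\nabla f$ does not fully cancel and leaves a $(\beta - 1)$-weighted remainder, estimated using $\alpha(t)|\nabla\phi|^2 \le \alpha(0)\cdot\frac{nC}{t}$ — actually more precisely the pointwise bound $\alpha \nabla_i\phi\nabla_j\phi f^i f^j \le \alpha(0)\frac{C}{t}|\nabla f|^2$. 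I expect the main obstacle to be the careful bookkeeping of \emph{all} the time-derivative-of-metric terms: every place where a norm or a trace is taken implicitly uses $g(t)$, so $\partial_t$ generates $S_{ij}$-contractions that must be tracked, and the commutator $[\partial_t, \triangle]$ applied to $f$ must be computed correctly on the evolving manifold (this is the step where the structure of the $(RH)_\alpha$ flow, rather than just Ricci flow, enters, since $\partial_t \triangle$ involves $\frac{\partial}{\partial t}(g^{ij}\nabla_i\nabla_j)$ and hence both $-2\Ric$ and $+2\alpha\nabla\phi\otimes\nabla\phi$). Getting the constants to land exactly as $\frac{2a\beta t}{n}$, $\frac{\beta t n}{2b}\max\{k_1^2,k_2^2\}$, etc., will require choosing the Young's-inequality weights to be precisely $b$-dependent; everything else is a (lengthy but routine) tensor computation.
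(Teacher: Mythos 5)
Your plan follows essentially the same route as the paper's proof: the same Bochner-type computation for $F$ on the evolving metric (tracking the $S_{ij}$-terms from $\partial_t g$), the same splitting $a+2b=1/\beta$ of the Hessian coefficient with completion of squares to absorb the $R_{ij}f_{ij}$ and $\alpha\nabla_i\phi\nabla_j\phi\, f_{ij}$ cross terms, and the same pointwise bounds yielding the $-2k_1\beta t|\nabla f|^2$ and $-2(\beta-1)\alpha(0)C|\nabla f|^2$ leftovers. The sketch is correct; carrying out the bookkeeping as you describe reproduces the paper's argument.
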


\begin{proof}
Since $u_t=\triangle u$ and $f=\log u$, then $f_t=\triangle f+|\nabla f|^2$. Also, note that 
\begin{align*}
(|\nabla f|^2)_t&=2S_{ij}\nabla_if\nabla_jf+2\nabla f\cdot \nabla(f_t)\\ 
(\triangle f)_t&=2S_{ij}\nabla_i\nabla_j f+\triangle (f_t) \\
\triangle\nabla_i f&=\nabla_i\triangle f+R_{ij}\nabla_j f\\
\triangle|\nabla f|^2&=2|\nabla\nabla f|^2+2R_{ij}\nabla_if\nabla_j f+2\nabla_i f\nabla_i(\triangle f)
\end{align*}
The first two follow from the flow equation, while the last two are Bochner identities.

Let's denote $\nabla_i f:=f_i$ and $\nabla_i\nabla_jf:=f_{ij}$.

Using these, one gets that 
\[\triangle F-F_t=-2\nabla f\cdot \nabla F-(|\nabla f|^2-\beta f_t)+2t(f_{ij}^2+\beta S_{ij}f_{ij})+2tR_{ij}f_if_j+2t(\beta-1)S_{ij}f_if_j\]

The last two terms can be bounded as follows:
\begin{align*}
2tR_{ij}f_if_j+2t(\beta-1)S_{ij}f_if_j & = 2t\beta R_{ij}f_if_j-2t(\beta-1)\alpha(t)\nabla_i\phi\nabla_j\phi f_if_j  \\
 & \geq -2t\beta k_1 |\nabla f|^2-2(\beta-1)\alpha(0) C |\nabla f|^2
\end{align*}

Suppose at time $t$, at the point $x$ where we are estimating the quantity, we choose normal coordinates associated to the Levi-Civita connection. Then at this point the metric $g$ is diagonal (and hence so is $\Ric$), in fact it is the identity, and has zero directional derivatives. However, that doesn't mean that $\nabla\phi\otimes\nabla\phi$ can also be diagonalized. But  this point $g_{ij}=g^{ij}$ and hence $\alpha(t)\nabla_i\phi\nabla_j\phi f_if_j\leq \alpha(0)\frac{C}{t}g^{ij}f_if_j=\alpha(0) C |\nabla f|^2$.

The term $f_{ij}^2+\beta S_{ij}f_{ij}$ needs to be dealt separately:
\begin{align*}
f_{ij}^2+\beta S_{ij}f_{ij}& =f_{ij}^2+\beta R_{ij}f_{ij}-\beta\alpha\nabla_i\phi\nabla_j\phi f_{ij}\\
  & =(a+2b)\beta f_{ij}^2+\beta R_{ij}f_{ij}-\beta\alpha\nabla_i\phi\nabla_j\phi f_{ij} \\
  & = a\beta f^2_{ij}+\beta\left(\sqrt{b}f_{ij}+\frac{R_{ij}}{2\sqrt{b}}\right)-\frac{\beta}{4b}R_{ij}^2+\beta\left(\sqrt{b}f_{ij}+\frac{\alpha\nabla_i\phi\nabla_j\phi}{2\sqrt{b}}\right)-\frac{\beta}{4b}(\alpha\nabla_i\phi\nabla_j\phi)^2 \\
  &\geq a\beta f^2_{ij} -\frac{\beta}{4b}R_{ij}^2-\frac{\beta}{4b}(\alpha\nabla_i\phi\nabla_j\phi)^2 \\
  & \geq  a\beta f^2_{ij}-\frac{\beta nk^2}{4b}-\frac{\beta\alpha^2(0) n C^2}{4bt^2}
\end{align*}
where $k=\max\{k_1,k_2\}$.

Note that because $\nabla_i\phi\nabla_j\phi\leq \frac{C}{t}g_{ij}$, then $(\nabla_i\phi\nabla_j\phi)^2\leq \frac{C}{t}g_{ij}g^{ij}=\frac{nC^2}{t^2}$ since we are using normal coordinates ($g_{ij}=g^{ij}$). 

Finally, let's observe that $\sum f_{ij}^2\geq\frac{(\triangle f)^2}{n}=\frac{(f_t-|\nabla f|^2)^2}{n}$. 

Putting all together, we obtain: 

\begin{align*}
\triangle F-F_t & \geq -2\nabla f\cdot \nabla F-(|\nabla f|^2-\beta f_t) +\frac{2a\beta t}{n}\left(|\nabla f|^2-f_t\right)^2 -2k_1\beta t|\nabla
f|^2\\
 &-\frac{\beta tn}{2b}\max\left\{k_1^2,k_2^2\right\}-\frac{\beta\alpha^2(0)n}{2b}\frac{C^2}{t} - 2(\beta-1)\alpha(0) C|\nabla f|^2  
\end{align*}
\end{proof}

The proofs of Theorem \ref{thm_sp-tm-local} will involve a cut-off function on
$B_{\rho,T}$. The construction of this function will rely on the following well-known lemma. It was previously used in the proofs of Theorems~2.3 and~3.1 in~\cite{QZ06}; see also~\cite[Chapter~IV]{RSSTY94} and~\cite{PSQZ06}.

\begin{lemma}\label{lem_cutoff}
Given $\tau\in(0,T]$, there exists a smooth function
$\bar\Psi:[0,\infty)\times[0, T]\to\mathbb R$ satisfying the
following requirements:
\begin{enumerate}
\item
The support of $\bar\Psi(r,t)$ is a subset of $[0,\rho]\times[0,T]$,
and $0\leq\bar\Psi(r,t)\leq 1$ in $[0,\rho]\times[0,T]$.
\item
The equalities $\bar\Psi(r,t)=1$ and
$\frac{\partial\bar\Psi}{\partial r}(r,t)=0$ hold in
$\left[0,\frac{\rho}2\right]\times\left[\tau,T\right]$ and
$\left[0,\frac{\rho}2\right]\times\left[0,T\right]$, respectively.
\item
The estimate $\left|\frac{\partial\bar\Psi}{\partial
t}\right|\leq\frac{\bar C\bar\Psi^{\frac12}}{\tau}$ is satisfied on
$[0,\infty)\times[0,T]$ for some $\bar C>0$, and $\bar\Psi(r,0)=0$
for all $r\in[0,\infty)$.
\item
The inequalities
$-\frac{C_a\bar\Psi^a}\rho\leq\frac{\partial\bar\Psi}{\partial
r}\leq 0$ and $\left|\frac{\partial^2\bar\Psi}{\partial
r^2}\right|\leq\frac{C_a\bar\Psi^a}{\rho^2}$ hold on
$[0,\infty)\times[0,T]$ for every $a\in(0,1)$ with some constant
$C_a$ dependent on $a$.
\end{enumerate}
\end{lemma}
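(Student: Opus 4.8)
Here is how I would approach Lemma~\ref{lem_cutoff}.

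The plan is to take $\bar\Psi$ of product form, $\bar\Psi(r,t)=\varphi(r)\,\chi(t)$, where $\varphi$ is a spatial cutoff adapted to $\rho$ and $\chi$ a temporal cutoff adapted to $\tau$, each obtained by rescaling a fixed one-variable model profile. First I would reduce the four requirements to single-variable conditions. Since $0\le\varphi,\chi\le1$ the product takes values in $[0,1]$ and $\operatorname{supp}\bar\Psi\subseteq(\operatorname{supp}\varphi)\times[0,T]$, so requirement~(1) becomes $\operatorname{supp}\varphi\subseteq[0,\rho]$ together with $\varphi\equiv1$ on $[0,\rho/2]$. Requirement~(2) holds once $\varphi\equiv1$ (hence $\varphi'\equiv0$) on $[0,\rho/2]$ and $\chi\equiv1$ on $[\tau,T]$. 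For~(3), $\partial_t\bar\Psi=\varphi\chi'$ and $\bar\Psi(r,0)=\varphi(r)\chi(0)$, so it suffices that $\chi(0)=0$ and $|\chi'(t)|\le\bar C\,\chi(t)^{1/2}/\tau$, using $\varphi\le\varphi^{1/2}\le1$. Finally $\partial_r\bar\Psi=\varphi'\chi$ and $\partial_r^2\bar\Psi=\varphi''\chi$, and since $\chi\le1$ gives $\chi^{1-a}\le1$, requirement~(4) follows once $-C_a\varphi^a/\rho\le\varphi'\le0$ and $|\varphi''|\le C_a\varphi^a/\rho^2$ hold for every $a\in(0,1)$.

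It remains to produce the two profiles. Let $\Theta:\mathbb R\to[0,1]$ be the classical smooth non-decreasing function with $\Theta\equiv0$ on $(-\infty,0]$ and $\Theta\equiv1$ on $[1,\infty)$, built from $s\mapsto e^{-1/s}$, so that $1-\Theta$ vanishes to infinite order at $1$ and $\Theta$ vanishes to infinite order at $0$. Put $\chi(t):=\Theta(t/\tau)$ and $\varphi(r):=\Phi(r/\rho)$, where $\Phi(u):=1-\Theta(2u-1)$ for $u\in[\tfrac12,1]$, $\Phi\equiv1$ on $[0,\tfrac12]$ and $\Phi\equiv0$ on $[1,\infty)$; then $\varphi$ is non-increasing, $\chi$ is non-decreasing, both are smooth (the infinite-order vanishing handles the junctions), and they have the required support and boundary values. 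The rescaling converts model derivative bounds $|\Phi^{(j)}|\le C_a\Phi^a$ into $|\varphi^{(j)}|\le C_a\varphi^a/\rho^j$, and likewise for $\chi$ with $\tau$ in place of $\rho$, so everything reduces to estimating $\Phi$ and $\Theta$ on their transition intervals.

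The hard part --- really the only point that is not bookkeeping --- is checking that $\varphi$ decays at the right rate near $r=\rho$ so that the two inequalities in~(4) hold \emph{simultaneously} for all $a\in(0,1)$ with an $a$-dependent constant. With the $e^{-1/s}$-gluing, near the edge one has $\varphi(r)\asymp\exp\!\big(-c/(1-r/\rho)\big)$; writing $x:=(1-r/\rho)^{-1}\to\infty$, a direct differentiation gives $|\varphi'(r)|\asymp\rho^{-1}x^{2}e^{-cx}$ and $|\varphi''(r)|\asymp\rho^{-2}P(x)e^{-cx}$ for a fixed polynomial $P$. Dividing by $\varphi^{a}\asymp e^{-acx}$ leaves $\rho^{-j}\,(\text{polynomial in }x)\,e^{-(1-a)cx}$, which is bounded on $[0,\infty)$ for each fixed $a<1$, with a bound $C_a$ blowing up as $a\uparrow1$ --- exactly what the statement permits. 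Away from a neighbourhood of $r=\rho$ the function $\varphi$ is bounded below and its derivatives are bounded, so~(4) is trivial there. The estimate in~(3) is the same computation: near $t=0$, $\chi(t)\asymp e^{-\tau/t}$ and $|\chi'(t)|\asymp\tau^{-1}(\tau/t)^{2}e^{-\tau/t}$, so $|\chi'|\,\chi^{-1/2}\le\bar C/\tau$ with $\bar C$ absolute, while for $t$ bounded away from $0$ the bound is immediate. Assembling requirements (1)--(4) then completes the construction.
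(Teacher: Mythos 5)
Your construction is correct. Note that the paper does not prove this lemma at all --- it simply invokes it as well known, citing Zhang~\cite{QZ06}, Souplet--Zhang and Schoen--Yau --- and your product cutoff $\bar\Psi(r,t)=\varphi(r)\chi(t)$ with rescaled $e^{-1/s}$-type profiles is essentially the standard construction used in those references; in particular you correctly identify and verify the one non-trivial point, namely that the profiles must vanish to faster than any polynomial order at the edges (exponential-order vanishing) so that the bounds $|\partial_r\bar\Psi|\le C_a\bar\Psi^a/\rho$, $|\partial_r^2\bar\Psi|\le C_a\bar\Psi^a/\rho^2$ hold simultaneously for every $a\in(0,1)$ with $C_a\to\infty$ only as $a\uparrow1$, and that $\varphi^{1/2},\chi^{1-a}\le1$ lets the one-variable estimates pass to the product.
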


\subsection{Proof of Theorem 1.1}\label{global}

\begin{proof}
Let $f=\log u$. Denote $F=t\left(|\nabla f|^2-f_t\right)$. Fix
$\tau\in(0,T]$ and choose a point $(x_0,t_0)\in M\times[0,\tau]$
where $F$ attains its maximum on $M\times[0,\tau]$. We will show that
\begin{align*}
F(x_0,t_0)\le t_0kn+\frac{C_n}{2}\,.
\end{align*}

If $t_0=0$, then $F(x,t_0)$ is equal to~0 for every $x\in M$ and the
estimate is trivially true. Assume then that $t_0>0$ without loss of generality. We apply Lemma~\ref{lemma_LY} with $k_1=0$, $k_2=k$, $\beta=1$ and $a+2b=1$ and we obtain: 
\begin{align*}
\left(\Delta-\frac\partial{\partial t}\right)F\ge-2\nabla f\nabla
F+\frac{2a}n\frac{F^2}{t_0}-\frac{F}{t_0}-\frac{t_0n}{(1-a)}\,k^2-\frac{\alpha^2(0)n}{1-a}\frac{C^2}{t_0}
\end{align*}
for all $a\in(0,1)$ at the point $(x_0,t_0)$. 

$F$ attains its maximum at $(x_0,t_0)$, which implies that $\triangle F(x_0,t_0)\le0$, $\frac\partial{\partial t}F(x_0,t_0)\ge0$, and $\nabla F(x_0,t_0)=0$. Hence the estimate
\begin{gather*}
\frac{2a}n\frac{F^2}{t_0}-\frac{F}{t_0}-\frac{t_0n}{(1-a)}\,k^2 -\frac{\alpha^2(0)n}{1-a}\frac{C^2}{t_0}\le0 \\
\Updownarrow \\
\frac{2a}nF^2-F-\frac{t_0^2n}{(1-a)}\,k^2 -\frac{\alpha^2(0)n}{1-a}C^2\le0
\end{gather*}
holds at $(x_0,t_0)$.

The quadratic formula implies that 
\begin{align*}
F(x_0,t_0)\le\frac
n{4a}\left(1+\sqrt{1+\frac{8a}{1-a}\cdot (t_0^2k^2+\alpha^2(0)C^2)}\,\right).
\end{align*}
Choosing the following value for $a\in(0,1)$ (which happens to also minimize the RHS expression): 
\[a=\frac{1+\sqrt{2(t_0^2k^2+\alpha^2(0)C^2)}}{1+2\sqrt{2(t_0^2k^2+\alpha^2(0)C^2)}}\]

leads to the following inequality: 
\begin{align*}
F(x_0,t_0)&\leq \frac{n}{4}\cdot \left(1+2\sqrt{2(t_0^2k^2+\alpha^2(0)C^2)}\right) \\
          &\leq \frac{n}{4}\cdot \left(1+4(t_0k+\alpha(0)C)\right)=nt_0k+n\alpha(0)C+\frac{n}{4}
\end{align*}

The fact that $(x_0,t_0)$ is a maximum point for $F$ on $M\times[0,\tau]$
enables us to conclude that
\begin{align*}
F(x,\tau)\le F(x_0,t_0)\le nt_0k+n\alpha(0)C+\frac{n}{4}\leq \tau k n+ n\alpha(0)C+\frac{n}{4}
\end{align*}
for all $x\in M$. Therefore, the estimate
\begin{align*}
\frac{|\nabla u|^2}{u^2} -\frac{u_t}{u}\le kn+\frac{C_n}{2\tau}
\end{align*}
holds at $(x,\tau)$ ($C_n=n/2+4nC\alpha(0)$). Because the number $\tau\in(0,T]$ was chosen
arbitrarily, this holds for all $t\in(0,T]$.
\end{proof}

\subsection{Proof of Theorem 1.2}\label{local}

\begin{proof}
 
Let $f=\log u$ and $F=t\left(|\nabla f|^2-\beta f_t\right)$. By Lemma \ref{lemma_LY}, one has that 
\begin{align*} 
 \triangle F-F_t & \geq -2\nabla f\cdot \nabla F-(|\nabla f|^2-\beta f_t) +\frac{2a\beta t}{n}\left(|\nabla f|^2-f_t\right)^2 -2k_1\beta t|\nabla
f|^2\\
 &-\frac{\beta tn}{2b}\max\left\{k_1^2,k_2^2\right\}-\frac{\beta\alpha^2(0)n}{2b}\frac{C^2}{t} - 2(\beta-1)\alpha(0) C|\nabla f|^2
\end{align*}
 
To make the computation easier to follow, let's assume $k_2=\max\{k_1,k_2\}$ and denote $\alpha(0)=\alpha$. Therefore the above inequality becomes:

\begin{align}\label{lemma_LYM} 
 \triangle F-F_t & \geq -2\nabla f\cdot \nabla F-(|\nabla f|^2-\beta f_t) +\frac{2a\beta t}{n}\left(|\nabla f|^2-f_t\right)^2 -2k_1\beta t|\nabla
f|^2\notag \\
 &-\frac{\beta tnk_2}{2b}-\frac{\beta\alpha^2n}{2b}\frac{C^2}{t} - 2(\beta-1)\alpha C|\nabla f|^2
\end{align}

Following the classical technique, let $\tau\in (0,T]$, $x_0\in M$ and fix $\bar\Psi(r,t)$ satisfying the conditions of Lemma~\ref{lem_cutoff}. Further define $\Psi:M\times[0,T]\to
\mathbb R$ by setting
\begin{align*}
\Psi(x,t)=\bar\Psi(\dist(x,x_0,t),t).
\end{align*}

We will the apply maximum principle to the expression $\left(\frac{\partial}{\partial
t}-\Delta\right)(\Psi F)$ and we will establish~\eqref{sp-tm-loc} at $(x,\tau)$ for $x\in M$ such
that $\dist(x,x_0,\tau)<\frac\rho2$. 

From the inequality \eqref{lemma_LYM} and some manipulations, one obtains: 
\begin{align}\label{aux1_sp-time}
\left(\Delta-\frac{\partial}{\partial t}\right)(\Psi F)& \geq
-2\nabla f\nabla(\Psi F)+2F\nabla f\nabla\Psi \notag
\\ &\hphantom{=}~+\left(\frac{2a\beta t}{n}\left(|\nabla f|^2-f_t\right)^2-(|\nabla f|^2-\beta f_t)  -2k_1\beta t|\nabla f|^2\right)\Psi \notag\\
 &\hphantom{=}~+\left(-\frac{\beta tnk_2}{2b}-\frac{C^2\beta\alpha^2n}{2bt} - 2(\beta-1)\alpha C|\nabla f|^2\right)\Psi \notag \\
&\hphantom{=}~+(\Delta \Psi)F+2\frac{\nabla\Psi}{\Psi}\,\nabla(\Psi
F)-2\frac{|\nabla\Psi|^2}{\Psi}\,F - \frac{\partial\Psi}{\partial
t}\,F
\end{align}
This holds in the region of $B_{\rho,T}$ where $\Psi(x,t)$ is smooth and strictly
positive. Let $(x_1,t_1)$ be a maximum point for the function $\Psi
F$ in the set $\left\{(x,t)\in
M\times[0,\tau]\,|\dist(x,x_0,t)\le\rho\right\}$. We may assume
$(\Psi F)(x_1,t_1)>0$ without loss of generality. Note that if this is
not the case, then $F(x,\tau)\le0$ and~\eqref{sp-tm-loc} is obvious
at $(x,\tau)$ whenever $\dist(x,x_0,\tau)<\frac\rho2$.  We may also
assume that $\Psi(x,t)$ is smooth at $(x_1,t_1)$ due to a standard
trick explained, for example, in~\cite[page 21]{RSSTY94}. Since
$(x_1,t_1)$ is a maximum point, the equalities $\Delta(\Psi
F)(x_1,t_1)\leq 0$, $\nabla(\Psi F)(x_1,t_1)= 0$, and $(\Psi
F)_t(x_1,t_1)\ge0$ hold. Therefore \eqref{aux1_sp-time} gives:
\begin{align}\label{aux2_sp-time}
0& \geq 2F\nabla f\nabla\Psi +\left(\frac{2a\beta t_1}{n}\left(|\nabla f|^2-f_t\right)^2-(|\nabla f|^2-\beta f_t) -2k_1\beta t_1|\nabla f|^2\right)\Psi \notag\\
 &+\left(-\frac{\beta t_1nk_2}{2b}-\frac{C^2\beta\alpha^2n}{2bt_1} - 2(\beta-1)\alpha C|\nabla f|^2\right)\Psi \notag \\
&\hphantom{=}~+(\Delta \Psi)F-2\frac{|\nabla\Psi|^2}{\Psi}\,F
-\frac{\partial\Psi}{\partial t}\,F
\end{align}
at $(x_1,t_1)$. We will use~\eqref{aux2_sp-time} to show that a
certain quadratic expression in $\Psi F$ is non-positive. 

Recalling Lemma~\ref{lem_cutoff}, together with the Laplacian
comparison theorem, one can deduce that
\begin{align*}
-\frac{|\nabla\Psi|^2}{\Psi}&\geq-\frac{C_{\frac12}^2}{\rho^2}\,, \\
\Delta\Psi &\geq -\frac{C_{\frac12}}{\rho^2}
-\frac{C_{\frac12}\Psi^{\frac12}}{\rho}\,(n-1)\sqrt{k_1}\,\coth\left(\sqrt{k_1}\,\rho\right)\ge
-\frac{d_1}{\rho^2} -\frac{d_1\Psi^{\frac12}}{\rho}\,\sqrt{k_1}
\end{align*}
at the point $(x_1,t_1)$ with $d_1$ a positive constant depending on
$n$. 

We will now find a bound for $(\Psi_t)(x_1,t_1)$. By definition of $\Psi$:
\begin{align}\label{tm-der-aux1}
(\Psi_t)(x_1,t_1)&=\frac{\partial \bar\Psi}{\partial
t}(\dist(x_1,x_0,t_1),t_1)\notag \\ &\hphantom{=}~+
\frac{\partial \bar\Psi}{\partial
r}(\dist(x_1,x_0,t_1),t_1)\left(\frac{\partial}{\partial
t}\dist(x_1,x_0,t_1)\right).
\end{align}

Using the fact that the function~$\bar\Psi(r,t)$
satisfies the conditions listed in Lemma~\ref{lem_cutoff}, the
inequality
\begin{align}\label{tm-der-aux3}
\left|\frac{\partial \bar\Psi}{\partial
r}(\dist(x_1,x_0,t_1),t_1)\right|\leq
\frac{C_{\frac12}}{\rho}\,\Psi^{\frac12}(x_1,t_1)
\end{align}
holds with $C_{\frac12}>0$. For the estimate of the derivative of
the distance, notice that: 
\begin{align}\label{tm-der-aux2}
\left|\frac{\partial}{\partial
t}\dist(x_1,x_0,t_1)\right|\le\sup\int\limits_{0}^{\dist(x_1,x_0,t_1)}\left|\Ric\left(\frac
d{ds}\zeta(s),\frac d{ds}\zeta(s)\right)\right|ds \leq
k_2\dist(x_1,x_0,t_1)\leq k_2\rho.
\end{align}
Here $\Ric$ represents the Ricci curvature of $g(x,t_1)$ and the supremum is taken over all the minimal geodesics $\zeta(s)$, with respect to $g(x,t_1)$, that connect~$x_0$ to~$x_1$
and are parametrized by arclength; see, e.g.,~\cite[Proof~of~Lemma~8.28]{BCPLLN06}. 

Moreover, again by Lemma~\ref{lem_cutoff} 
\begin{align*}
\left|\frac{\partial\bar\Psi}{\partial t}(\dist(x_1,x_0,t_1),t_1)\right|\leq \frac{c_2\bar\Psi^{1/2}}{\tau}\leq \frac{c_2\Psi^{1/2}}{\tau}
\end{align*}
for a positive constant $c_2$. 

Therefore, we can conclude that there exists $\bar C>0$ such that the inequality
\begin{align*}
-\frac{\partial\Psi}{\partial t}&\geq -\frac{\bar
C\Psi^{\frac12}}{\tau}-C_{\frac12}k_2\Psi^{\frac12}
\end{align*}
holds true. Combining this with~\eqref{aux2_sp-time}, we find the estimate
\begin{align*}
0& \geq -2F|\nabla f||\nabla\Psi| + \left(\frac{2a\beta t_1}{n}\left(|\nabla f|^2-f_t\right)^2-(|\nabla f|^2-\beta f_t) -2k_1\beta t_1|\nabla f|^2\right)\Psi \\
&+\left(-\frac{\beta t_1nk_2}{2b}-\frac{C^2\beta\alpha^2n}{2bt_1} - 2(\beta-1)\alpha C|\nabla f|^2\right)\Psi \\
&\hphantom{=}~+d_2\left(-\frac1{\rho^2}-\frac{\Psi^{\frac12}}{\rho}\,\sqrt{k_1}-\frac{\Psi^{\frac12}}{\tau}
-k_2\Psi^{\frac12}\right)F
\end{align*}
at $(x_1,t_1)$. Here, $d_2$ is equal to
$\max\left\{3d_1,C_{\frac12},3C_{\frac12}^2,\bar C\right\}$. Let us now multiply by $t_1\Psi$, rearrange the terms and obtain: 
\begin{align}\label{aux3_sp-time}
0& \geq -2t_1F\frac{C_{\frac12}\Psi^\frac32}{\rho}\,|\nabla f|
+\frac{2t_1^2}n\left(a\beta\left(\Psi|\nabla f|^2-\Psi f_t\right)^2
-nk_1\beta\Psi^2|\nabla f|^2-\frac{n^2\beta}{4b} k_2^2\Psi^2\right) \notag\\
 & +\left(-\frac{C^2\beta\alpha^2n^2}{4bt_1^2}\Psi^2-\frac{(\beta-1)\alpha C|\nabla f|^2}{t_1}\Psi^2\right) \notag\\
&\hphantom{=}~+d_2t_1\left(-\frac1{\rho^2}-\frac{\sqrt{k_1}}{\rho}-\frac1\tau-
k_2\right)(\Psi F)-\Psi F
\end{align}
at $(x_1,t_1)$. Our next step is to estimate the first two terms in
the right-hand side. The procedure is standard (see \cite{PLSTY86}). 

Define $y=\Psi|\nabla f|^2$ and $z=\Psi f_t$. It follows that
$y^{\frac12}(y-\beta z)=\frac{\Psi^{\frac32}F|\nabla f|}{t}$ when
$t\ne0$, which yields
\begin{align*}
-2t_1F\frac{C_{\frac12}\Psi^\frac32}{\rho}\,|\nabla f|
+\frac{2t_1^2}n\left(a\beta\left(\Psi|\nabla f|^2-\Psi f_t\right)^2
-nk_1\beta\Psi^2|\nabla f|^2-\frac{n^2\beta}{4b} k_2^2\Psi^2\right) \\
+\left(- \frac{C^2\beta\alpha^2n^2}{4bt_1^2}\Psi^2-\frac{(\beta-1)\alpha C|\nabla f|^2}{t_1}\Psi^2\right) \\
= \frac{2t_1^2}{n}\,\left(-\frac{nC_{\frac12}}{\rho}\,y^{\frac12}(y-\beta z)+a\beta(y-z)^2- nk_1\beta y-\frac{n^2\beta}{4b}\,k_2^2\Psi^2- \frac{C^2\beta\alpha^2n^2}{4bt_1^2}\Psi^2-\frac{(\beta-1)\alpha C}{t_1}y\Psi\right).
\end{align*}
Observe that \begin{align*}(y-z)^2=\frac{1}{\beta^2}\,(y-\beta
z)^2+\frac{(\beta-1)^2}{\beta^2}\,y^2+\frac{2(\beta-1)}{\beta^2}\,y(y-\beta
z)\end{align*} and plug this into the above:
\begin{align*}\frac{2t_1^2}{n}\,\left(-\frac{nC_{\frac12}}{\rho}\,y^{\frac12}(y-\beta z)+a\beta(y-z)^2- nk_1\beta y-\frac{n^2\beta}{4b}\,k_2^2\Psi^2- \frac{C^2\beta\alpha^2n^2}{4bt_1^2}\Psi^2-\frac{(\beta-1)\alpha C}{t_1}y\Psi\right)\\
= \frac{2t_1^2}{n}\,\left(-\frac{nC_{\frac12}}{\rho}\,y^{\frac12}(y-\beta z)+\frac{a(y-\beta z)^2}{\beta}+\frac{a}{\beta}(\beta-1)^2y^2+\frac{2(\beta-1)a}{\beta}y(y-\beta z)\right)\\
 +\frac{2t_1^2}{n}\left(-nk_1\beta y-\frac{n^2\beta}{4b}\,k_2^2\Psi^2- \frac{C^2\beta\alpha^2n^2}{4bt_1^2}\Psi^2-\frac{(\beta-1)\alpha C}{t_1}y\Psi\right)
\end{align*}
Using the inequality $\kappa_1v^2-\kappa_2v\ge -\frac{\kappa_2^2}{4\kappa_1}$, which holds for $\kappa_1,\kappa_2>0$ and $v\in\mathbb R$, we obtain:

\begin{align*}
\frac{2(\beta-1)a}{\beta}y(y-\beta z)-\frac{nC_{\frac12}}{\rho}\,y^{\frac12}(y-\beta z)\geq -\frac{n^2C_{\frac{1}{2}}^2\beta(y-\beta z)}{\rho^28a(\beta-1)} \\
\frac{a}{\beta}(\beta-1)^2y^2-\left(nk_1\beta+\frac{(\beta-1)\alpha C}{t_1}\Psi\right)y\geq -\frac{\beta\left(nk_1\beta+\frac{(\beta-1)\alpha C}{t_1}\Psi\right)^2}{4a(\beta-1)^2}
\end{align*}

It then follows that: 

\begin{align*}
-2t_1F\frac{C_{\frac12}\Psi^\frac32}{\rho}\,|\nabla f|
+\frac{2t_1^2}n\left(a\beta\left(\Psi|\nabla f|^2-\Psi f_t\right)^2
-nk_1\beta\Psi^2|\nabla f|^2-\frac{n^2\beta}{4b} k_2^2\Psi^2\right)\\
+\left(- \frac{C^2\beta\alpha^2n^2}{4bt_1^2}\Psi^2-\frac{(\beta-1)\alpha C|\nabla f|^2}{t_1}\Psi^2\right)\\
\geq \frac{2t_1^2}{n} \left(\frac{a(y-\beta z)^2}{\beta}-\frac{n^2\beta}{4b}\,k_2^2\Psi^2- \frac{C^2\beta\alpha^2n^2}{4bt_1^2}\Psi^2-\frac{n^2C_{\frac{1}{2}}^2\beta(y-\beta z)}{\rho^28a(\beta-1)} - \frac{\beta\left(nk_1\beta+\frac{(\beta-1)\alpha C}{t_1}\Psi\right)^2}{4a(\beta-1)^2}\right)
\end{align*}

Recall that $t(y-\beta z)=\Psi F$ so inequality \eqref{aux3_sp-time} becomes: 
\begin{align*}
0& \geq \frac{2a}{n\beta}\,(\Psi
F)^2+\left[-\frac{nd_3t_1}{\rho^2}\left(\frac{\beta}{a(\beta-1)}
+1+\rho\sqrt{k_1}+\frac{\rho^2}{\tau}+\rho^2k_2\right)-1\right](\Psi
F)\\
&\hphantom{=}~-\frac{\beta\left(nk_1\beta+\frac{(\beta-1)\alpha C}{t_1}\Psi\right)^2}{2an(\beta-1)^2}\,t_1^2-\frac{\beta n}{2b}\,t_1^2 k_2^2\Psi^2 - \frac{C^2\beta\alpha^2n}{2b}\Psi^2\\ 
&\geq \frac{2a}{n\beta}\,(\Psi
F)^2+\left(-\frac{d_4t_1}{\rho^2}\left(\frac{\beta}{a(\beta-1)}
+\frac{\rho^2}{\tau}+\rho^2\bar k\right)-1\right)(\Psi
F)\\
&\hphantom{=}~-\frac{\beta\left(nk_1\beta+\frac{(\beta-1)\alpha C}{t_1}\Psi\right)^2}{2an(\beta-1)^2}\,t_1^2-\left(\frac{\beta n}{2b}\,t_1^2 k_2^2+\frac{C^2\beta\alpha^2n}{2b}\right)\Psi^2\end{align*} 
at $(x_1,t_1)$ with
$d_3=\max\{4d_2,C_{1/2}\}$, $d_4=nd_3$ and $\bar k=\max\{k_1,k_2\}$. The expression in the last two lines is a polynomial in
$\Psi F$ of degree~2. The quadratic formula yields:
\begin{align*}
\Psi F &\leq \frac{n\beta}{4a}\left[2\frac{d_4t_1}{\rho^2}\left(\frac{\beta}{a(\beta-1)}
+\frac{\rho^2}{\tau}+\rho^2\bar k\right)+2\right] \\
       & +\frac{n\beta}{4a}\left[\frac{2}{n(\beta-1)}\left(nk_1b+\frac{(\beta-1)\alpha C}{t_1}\Psi\right)\,t_1+2\sqrt{\frac{a}{b}\,}\,(t_1 k_2+C\alpha)\Psi\right] \\
       & = \frac{n\beta}{2a}\left[\frac{d_4t_1}{\rho^2}\left(\frac{\beta}{a(\beta-1)}
+\frac{\rho^2}{\tau}+\rho^2\bar k\right)+1+
\frac{k_1bt_1}{(\beta-1)}+\left(\frac{\alpha C}{n}+\sqrt{\frac{a}{b}\,}\,(t_1 k_2+C\alpha)\right)\Psi\right] 
\end{align*}
at $(x_1,t_1)$. We will now use this estimate to obtain a bound on
$F(x,\tau)$ for an appropriate range of $x\in M$.

By definition $\Psi(x,\tau)=1$ whenever $\dist(x,x_0,\tau)<\frac\rho2$. Moreover, $(x_1,t_1)$ is a maximum point for $\Psi F$ in the set $\left\{(x,t)\in M\times[0,\tau]\,|\dist(x,x_0,t)\le\rho\right\}$. Hence 
\begin{align*}
F(x,\tau)&=(\Psi F)(x,\tau)\le (\Psi F)(x_1,t_1) \\ &\le \frac{n\beta}{2a}\cdot\frac{d_4\tau}{\rho^2}\left(\frac{\beta}{a(\beta-1)}
+\frac{\rho^2}{\tau}+\rho^2\bar k\right)+ \frac{n\beta}{2a}\left(1+\frac{\alpha C}{n}+\sqrt{\frac{a}{b}}\alpha C\right)+
 \frac{n\beta}{2a}\cdot\frac{k_1b\tau}{(\beta-1)}+ \frac{n\beta k_2}{2\sqrt{ab}}\tau
\end{align*}
for all $x\in M$ such that $\dist(x,x_0,\tau)<\frac\rho2$. Since
$\tau\in(0,T]$ was chosen arbitrarily, this formula implies 
\begin{align*}
\left(|\nabla f|^2-\beta f_t\right)(x,t) & \le \frac{n\beta}{2a}\cdot\frac{d_4}{\rho^2}\left(\frac{\beta}{a(\beta-1)}+\frac{\rho^2}{t}+\rho^2\bar k\right) \\ 
    & + \frac{n\beta}{2a t}\left(1+\frac{\alpha C}{n}+\sqrt{\frac{a}{b}}\alpha C\right)+
 \frac{n\beta}{2a}\cdot\frac{k_1b}{(\beta-1)}+\frac{n\beta k_2}{2\sqrt{ab}} \\
  & = \frac{n\beta}{2a}\cdot\frac{d_4}{\rho^2}\cdot\frac{\beta}{a(\beta-1)}
 + \frac{n\beta}{2at}\left(d_4+1+\frac{\alpha C}{n}+\sqrt{\frac{a}{b}}\alpha C\right) \\
 & + \frac{n\beta}{2a}\cdot\frac{k_1b}{(\beta-1)}+ \frac{n\beta}{2}\left(\frac{d_4\bar k}{a}+\frac{k_2}{\sqrt{ab}}\right)
\end{align*}

for $(x,t)\in B_{\frac\rho2,T},$, as long as $t\ne0$. 

If we set $a=\frac1{2\beta}$ and $b=\frac1{4\beta}$, then the above becomes: 

\begin{align*}
\left(|\nabla f|^2-\beta f_t\right)(x,t) & \le n\beta^2\cdot\frac{d_4}{\rho^2}\cdot\frac{2\beta^2}{(\beta-1)} + \frac{n\beta^2}{t}\left(d_4+1+\frac{\alpha C}{n}+\sqrt{2}\alpha C\right) \\
       & + \frac{n\beta k_1}{4(\beta-1)}+ n\beta^2\left(d_4\bar k+2k_2\right)
\end{align*}

Define the constant $C'$ as $\max\{2nd_4, n\left(d_4+1+\frac{\alpha C}{n}+\sqrt{2}\alpha C\right), n(d_4+2)\}$, estimate~\eqref{sp-tm-loc} will follow:
\begin{align*}
\left(|\nabla f|^2-\beta f_t\right)(x,t) & \le C'\beta^2\cdot\left(\frac{\beta^2}{\rho(\beta-1)}+\frac{1}{t}+\max\{k_1,k_2\}\right)+\frac{n\beta k_1}{4(\beta-1)}
\end{align*}
for $(x,t)\in B_{\frac\rho2,T},$, as long as $t\ne0$.
\end{proof}

\section{Harnack inequalities}\label{trei}

In this section, we will prove two Harnack inequalities as a direct application to the gradient estimates obtained in section \ref{doi}, as for example,~\cite[Chapter~IV]{RSSTY94}. One can find similar Harnack inequalities for the heat equation under the Ricci flow in the
papers~\cite{CG02,LN04,MBXCAP09}. 

Given $x_1,x_2\in M$ and $t_1,t_2\in(0,T)$ satisfying $t_1<t_2$,  define
\begin{align*}
\Gamma(x_1,t_1,x_2,t_2)=\inf\int\limits_{t_1}^{t_2}\left|\frac{d}{dt}\gamma(t)\right|^2dt.
\end{align*}
where the infimum is taken over the set $\Theta(x_1,t_1,x_2,t_2)$ of all
the smooth paths $\gamma:[t_1,t_2]\to M$ that connect~$x_1$
to~$x_2$ and the norm~$|\cdot|$ is calculated at time $t$. 

The way we will prove the Harnack inequalities is to use the above gradient estimates together with the next lemma: 

\begin{lemma}\label{lemma_Harnack}
Suppose $\big(\phi(x,t),g(x,t)\big)_{t\in[0,T]}$ is a solution to the $(RH)_\alpha$ flow
\eqref{RH_flow_intro} such that $M$ is a complete manifold. Let $u: M\times [0,T]\to
\mathbb{R}$ be a smooth positive function satisfying the heat
equation~\eqref{heat_intro}. Define $f=\log u$ and assume that
\begin{align*}\frac{\partial f}{\partial t}\geq\frac{1}{A_1}\left(|\nabla f|^2-A_2-\frac{A_3}{t}\right),
\qquad x\in M,~t\in(0,T],\end{align*} for some $A_1,A_2,A_3>0$. Then
the inequality
\begin{align*}
u(x_2,t_2)\geq
u(x_1,t_1)\left(\frac{t_2}{t_1}\right)^{-\frac{A_3}{A_1}}
\exp\left(-\frac{A_1}4\Gamma(x_1,t_1,x_2,t_2)-\frac{A_2}{A_1}\,(t_2-t_1)\right)
\end{align*}
holds for all $(x_1,t_1),(x_2,t_2)\in M\times(0,T)$ such that $t_1<t_2$.
\end{lemma}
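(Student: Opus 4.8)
The plan is to integrate the differential Harnack inequality along a path in space-time, exactly as in the classical Li–Yau argument. Fix $(x_1,t_1)$ and $(x_2,t_2)$ with $t_1<t_2$, let $\gamma:[t_1,t_2]\to M$ be any smooth path from $x_1$ to $x_2$, and consider the function $\eta(t)=f(\gamma(t),t)=\log u(\gamma(t),t)$. First I would differentiate: $\frac{d}{dt}\eta(t)=\nabla f\cdot\dot\gamma(t)+f_t(\gamma(t),t)$, where the spatial inner product and gradient are taken with respect to $g(t)$. The point of the hypothesis is to bound $f_t$ from below in terms of $|\nabla f|^2$, so I substitute $f_t\ge\frac1{A_1}\bigl(|\nabla f|^2-A_2-\frac{A_3}t\bigr)$ to get
\begin{align*}
\frac{d}{dt}\eta(t)\ge\nabla f\cdot\dot\gamma+\frac1{A_1}|\nabla f|^2-\frac{A_2}{A_1}-\frac{A_3}{A_1 t}.
\end{align*}

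The key algebraic step is to absorb the indefinite cross term $\nabla f\cdot\dot\gamma$ into the positive quadratic term $\frac1{A_1}|\nabla f|^2$ by completing the square. Using the elementary inequality $\nabla f\cdot\dot\gamma+\frac1{A_1}|\nabla f|^2\ge-\frac{A_1}4|\dot\gamma(t)|^2$ (Cauchy–Schwarz followed by $\frac1{A_1}|v|^2+v\cdot w\ge-\frac{A_1}{4}|w|^2$ with $v=\nabla f$, $w=\dot\gamma$), I obtain
\begin{align*}
\frac{d}{dt}\eta(t)\ge-\frac{A_1}4|\dot\gamma(t)|^2-\frac{A_2}{A_1}-\frac{A_3}{A_1 t}.
\end{align*}
Now I integrate this over $[t_1,t_2]$. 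The left side gives $\eta(t_2)-\eta(t_1)=\log\frac{u(x_2,t_2)}{u(x_1,t_1)}$; the three terms on the right integrate to $-\frac{A_1}4\int_{t_1}^{t_2}|\dot\gamma(t)|^2\,dt$, $-\frac{A_2}{A_1}(t_2-t_1)$, and $-\frac{A_3}{A_1}\log\frac{t_2}{t_1}$ respectively. Exponentiating yields
\begin{align*}
u(x_2,t_2)\ge u(x_1,t_1)\left(\frac{t_2}{t_1}\right)^{-\frac{A_3}{A_1}}\exp\left(-\frac{A_1}4\int_{t_1}^{t_2}|\dot\gamma(t)|^2\,dt-\frac{A_2}{A_1}(t_2-t_1)\right).
\end{align*}

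Finally, since $\gamma$ was an arbitrary path in $\Theta(x_1,t_1,x_2,t_2)$, I take the infimum over all such paths; the exponential is monotone in the path energy, so the infimum of the right-hand side is achieved by minimizing $\int_{t_1}^{t_2}|\dot\gamma|^2\,dt$, which is precisely $\Gamma(x_1,t_1,x_2,t_2)$ by definition. This gives the claimed inequality. I do not anticipate a serious obstacle here; the only points that need care are the sign bookkeeping in the completion of the square and the observation that the norm $|\dot\gamma(t)|$ in $\Gamma$ is computed with the time-dependent metric $g(t)$, which is exactly the metric appearing when one differentiates $f(\gamma(t),t)$ along the path — so no extra terms from the evolving metric enter the computation of $\frac{d}{dt}\eta$. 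One should also note $t_1>0$ so that $\log(t_2/t_1)$ is finite and the hypothesis, valid on $(0,T]$, applies along the whole path.
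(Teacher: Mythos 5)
Your proposal is correct and follows essentially the same route as the paper: differentiate $f(\gamma(t),t)$ along a connecting path, absorb the cross term $\nabla f\cdot\dot\gamma$ into $\frac1{A_1}|\nabla f|^2$ via the quadratic inequality $\kappa_1 v^2-\kappa_2 v\ge-\frac{\kappa_2^2}{4\kappa_1}$, integrate in time, exponentiate, and pass to the infimum defining $\Gamma$. No gaps.
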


\begin{proof}
The technique used to prove this result is traditional; see, for
example,~\cite[Chapter~IV]{RSSTY94} and~\cite{MBXCAP09, XCRH09}. Let
$\gamma(t)\in\Theta(x_1,t_1,x_2,t_2)$ be a path connecting $(x_1,t_1)$ and $(x_2,t_2)$. Taking the time derivative at the point $(\gamma(t),t)$, one obtains:
\begin{align*}
\frac{d}{dt}f(\gamma(t),t)&=\nabla
f(\gamma(t),t)\frac{d}{dt}\gamma(t)+\frac{\partial}{\partial s}f(\gamma(t),s)|_{s=t} \\
&\geq-|\nabla f(\gamma(t),t)|\left|\frac{d}{dt}\gamma(t)\right|
+\frac{1}{A_1}\left(|\nabla f(\gamma(t),t)|^2-A_2-\frac{A_3}{t}\right) \\
&\geq
-\frac{A_1}4\left|\frac{d}{dt}\gamma(t)\right|^2-\frac1{A_1}\left(A_2+\frac{A_3}{t}\right),\qquad
t\in[t_1,t_2].
\end{align*}
where for the last step one applies the inequality
$\kappa_1v^2-\kappa_2v\geq-\frac{\kappa_2^2}{4\kappa_1}$ (for
$\kappa_1,\kappa_2>0$ and $v\in\mathbb{R}$). 

Next, let us integrate from $t_1$ to $t_2$
\begin{align*}
f(x_2,t_2)-f(x_1,t_1)&=
\int\limits_{t_1}^{t_2}\frac{d}{dt}f(\gamma(t),t)\,dt
\\ &\ge-\frac{A_1}4\int\limits_{t_1}^{t_2}\left|\frac{d}{dt}\gamma(t)\right|^2\,dt
-\frac{A_2}{A_1}(t_2-t_1)-\frac{A_3}{A_1}\ln\frac{t_2}{t_1}.
\end{align*}
Now the conclusion follows by exponentiating.
\end{proof}

We will obtain two Harnack inequalities for~\eqref{RH_flow_intro}--\eqref{heat_intro}, one for noncompact manifolds, one for compact. 

\begin{theorem}
Let $\big(g(x,t),\phi(x,t)\big)_{t\in[0,T]}$ be a solution to the
$(RH)_\alpha$ flow~\eqref{RH_flow_intro}, where $M^n$ is complete and $\alpha(t)$ is a non-increasing function in time, bounded from below by $\bar\alpha$. Suppose $-k_1g(x,t)\le\Ric(x,t)\leq k_2g(x,t)$ for some $k_1,k_2>0$ and that $0\leq\nabla_i\phi\nabla_j\phi\leq \frac{C}{t}g_{ij}$, for all $i,j\in\{1,2,...,n\}$ ($C$ being a constant depending on $n$ and $\bar\alpha$). Consider a smooth positive function $u:M\times[0,T]\to\mathbb R$ solving the heat equation $\triangle u=u_t$. Given $\beta>1$, the estimate
\begin{align*}
u(x_2,t_2)\geq u(x_1,t_1)\left(\frac{t_2}{t_1}\right)^{-C'\beta}
\exp\left(-\frac{\beta}4\Gamma(x_1,t_1,x_2,t_2)-\left(C'\beta
\max\left\{k_1,k_2\right\}+\frac{n\beta k_1}{4(\beta-1)}\right)(t_2-t_1)\right)
\end{align*}
holds for all $(x_1,t_1),(x_2,t_2)\in M\times(0,T)$ such that $t_1<t_2$. The constant $C'$ comes from Theorem~\ref{thm_sp-tm-local}.
\end{theorem}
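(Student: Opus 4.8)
The plan is to derive this Harnack inequality as a direct consequence of Theorem~\ref{thm_sp-tm-local} combined with Lemma~\ref{lemma_Harnack}, by matching the hypotheses of the lemma with the conclusion of the local gradient estimate. First I would take the estimate \eqref{sp-tm-loc} from Theorem~\ref{thm_sp-tm-local}, valid on $B_{\frac{\rho}{2},T}$ for every $\beta>1$, and send $\rho\to\infty$. Since $M$ is complete and the hypothesis $-k_1g\le\Ric\le k_2g$ is assumed on all of $M\times[0,T]$ (not just on a fixed ball), the estimate holds for all $(x,t)\in M\times(0,T]$; as $\rho\to\infty$ the term $C'\beta^2\cdot\frac{\beta^2}{\rho(\beta-1)}$ vanishes, leaving
\[
\frac{|\nabla u|^2}{u^2}-\beta\frac{u_t}{u}\le C'\beta^2\Big(\frac1t+\max\{k_1,k_2\}\Big)+\frac{n\beta k_1}{4(\beta-1)}
\]
for all $(x,t)\in M\times(0,T]$.

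Next I would rewrite this in terms of $f=\log u$. Using $\frac{|\nabla u|^2}{u^2}=|\nabla f|^2$ and $\frac{u_t}{u}=f_t$, the displayed inequality becomes $|\nabla f|^2-\beta f_t\le C'\beta^2\max\{k_1,k_2\}+\frac{n\beta k_1}{4(\beta-1)}+\frac{C'\beta^2}{t}$, which rearranges to
\[
f_t\ge\frac1\beta\Big(|\nabla f|^2-C'\beta^2\max\{k_1,k_2\}-\frac{n\beta k_1}{4(\beta-1)}-\frac{C'\beta^2}{t}\Big).
\]
This is exactly the hypothesis of Lemma~\ref{lemma_Harnack} with the identifications $A_1=\beta$, $A_2=C'\beta^2\max\{k_1,k_2\}+\frac{n\beta k_1}{4(\beta-1)}$, and $A_3=C'\beta^2$. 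All three are positive since $\beta>1$ and $k_1,k_2,C'>0$.

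Finally I would invoke Lemma~\ref{lemma_Harnack} with these values. The conclusion of that lemma gives, for $t_1<t_2$,
\[
u(x_2,t_2)\ge u(x_1,t_1)\Big(\frac{t_2}{t_1}\Big)^{-A_3/A_1}\exp\Big(-\frac{A_1}{4}\Gamma(x_1,t_1,x_2,t_2)-\frac{A_2}{A_1}(t_2-t_1)\Big).
\]
Substituting $A_3/A_1=C'\beta^2/\beta=C'\beta$, $A_1/4=\beta/4$, and $A_2/A_1=\big(C'\beta^2\max\{k_1,k_2\}+\frac{n\beta k_1}{4(\beta-1)}\big)/\beta=C'\beta\max\{k_1,k_2\}+\frac{n\beta k_1}{4(\beta-1)}$ yields precisely the claimed inequality. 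The only genuine point requiring care — the "main obstacle," though it is mild — is justifying the passage $\rho\to\infty$: one must check that the curvature and $\nabla\phi\otimes\nabla\phi$ bounds in the theorem's hypotheses are genuinely global (they are, as stated), so that $C'$ does not depend on $\rho$ and the estimate survives the limit uniformly in $t$ on any $(0,T]$; everything else is algebraic bookkeeping in applying the two prior results.
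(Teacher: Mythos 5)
Your proposal is correct and follows the paper's proof essentially verbatim: let $\rho\to\infty$ in \eqref{sp-tm-loc}, rewrite the result as a lower bound on $f_t$, and apply Lemma~\ref{lemma_Harnack} with $A_1=\beta$, $A_3=C'\beta^2$. The one blemish is arithmetic: dividing $\frac{n\beta k_1}{4(\beta-1)}$ by $\beta$ gives $\frac{nk_1}{4(\beta-1)}$, not $\frac{n\beta k_1}{4(\beta-1)}$ as you wrote — this harmless slip actually produces a slightly stronger bound that still implies the stated estimate (the paper instead enlarges that term to $\frac{n\beta^2 k_1}{4(\beta-1)}$ before applying the lemma, so its constants match the statement exactly).
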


\begin{proof}
Let $\rho$ go to infinity in~\eqref{sp-tm-loc} and obtain that
\begin{align*}
\frac{u_t}{u}\ge\frac1\beta\left(\frac{|\nabla
u|^2}{u^2}-\frac{C'\beta^2}t-\left(C'\beta^2\max\left\{k_1,k_2\right\}+\frac{n\beta^2k_1}{4(\beta-1)}\right)\right)
\end{align*}
on $M\times(0,T]$. To get the conclusion, apply Lemma~\ref{lemma_Harnack}.
\end{proof}

\begin{theorem}
Let $\big(g(x,t),\phi(x,t)\big)_{t\in[0,T]}$ is a solution to the $(RH)_\alpha$
flow \eqref{RH_flow_intro}, where the manifold $M^n$ is compact and $\alpha(t)$ is a non-increasing function in time, bounded from below by $\bar\alpha$. Assume that $0\le\Ric(x,t)\leq kg(x,t)$ for some $k>0$ and all $(x,t)\in M\times[0,T]$. Moreover suppose that $0\leq\nabla_i\phi\nabla_j\phi\leq \frac{C}{t}g_{ij}$, for all $i,j\in\{1,2,...,n\}$ ($C$ being a constant depending on $n$ and $\bar\alpha$).  Let $u$ be a smooth
positive function $u:M\times[0,T]\to\mathbb R$ satisfying the heat
equation $u_t=\triangle u$. 
The estimate
\begin{align*}
u(x_2,t_2)\geq u(x_1,t_1)\left(\frac{t_2}{t_1}\right)^{-\frac{C_n}{2}}
\exp\left(-\frac14\Gamma(x_1,t_1,x_2,t_2)-kn(t_2-t_1)\right)
\end{align*}
holds for all $(x_1,t_1)\in M\times(0,T)$ and $(x_2,t_2)\in
M\times(0,T)$ as long as $t_1<t_2$. The constant $C_n$ is the same as in Theorem\ref{thm_sp-tm-global}. 
\end{theorem}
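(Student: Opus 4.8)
The plan is to obtain this Harnack inequality directly from the global space-time gradient estimate of Theorem~\ref{thm_sp-tm-global} combined with Lemma~\ref{lemma_Harnack}, in exact parallel with the way the previous theorem is deduced from Theorem~\ref{thm_sp-tm-local}. First I would set $f=\log u$, which is smooth on $M\times(0,T]$ because $u$ is smooth and positive. Since $M$ is compact (hence complete) and the hypotheses $0\le\Ric(x,t)\le kg(x,t)$ and $0\le\nabla_i\phi\nabla_j\phi\le\frac{C}{t}g_{ij}$ assumed here are precisely those of Theorem~\ref{thm_sp-tm-global}, the estimate \eqref{sp-tm-glob} applies and gives
\[
\frac{|\nabla u|^2}{u^2}-\frac{u_t}{u}\le kn+\frac{C_n}{2t}
\qquad\text{on } M\times(0,T],
\]
with $C_n=n/2+4nC\alpha(0)$ as in Theorem~\ref{thm_sp-tm-global}.

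Next I would rewrite this in the form demanded by Lemma~\ref{lemma_Harnack}. Using $|\nabla f|^2=|\nabla u|^2/u^2$ and $f_t=u_t/u$, the estimate above rearranges to
\[
f_t\ge|\nabla f|^2-kn-\frac{C_n}{2t},
\]
which is exactly the hypothesis of Lemma~\ref{lemma_Harnack} with the choice $A_1=1$, $A_2=kn$, $A_3=C_n/2$; all three are positive as required.

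Finally I would invoke Lemma~\ref{lemma_Harnack} with these values. Its conclusion reads
\[
u(x_2,t_2)\ge u(x_1,t_1)\left(\frac{t_2}{t_1}\right)^{-A_3/A_1}\exp\left(-\frac{A_1}{4}\Gamma(x_1,t_1,x_2,t_2)-\frac{A_2}{A_1}(t_2-t_1)\right),
\]
and substituting $A_1=1$, $A_2=kn$, $A_3=C_n/2$ yields precisely the asserted inequality for all $(x_1,t_1),(x_2,t_2)\in M\times(0,T)$ with $t_1<t_2$.

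I do not expect any genuine obstacle here: the mathematical content is entirely packaged in Theorem~\ref{thm_sp-tm-global} and Lemma~\ref{lemma_Harnack}, and the argument is just the bookkeeping of matching the constants $A_1,A_2,A_3$. The only point meriting a remark is that the quantity $\Gamma(x_1,t_1,x_2,t_2)$ is finite, so that the right-hand side is a genuine positive number; this is immediate since $M$ is compact and connected and the metrics $g(\cdot,t)$ vary smoothly, so $\Theta(x_1,t_1,x_2,t_2)$ is nonempty and the infimum defining $\Gamma$ is bounded above by the energy of any fixed smooth connecting path.
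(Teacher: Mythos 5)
Your proposal is correct and follows exactly the paper's own route: the paper likewise applies the global gradient estimate of Theorem~\ref{thm_sp-tm-global} to get $f_t\ge|\nabla f|^2-kn-\frac{C_n}{2t}$ and then invokes Lemma~\ref{lemma_Harnack} with $A_1=1$, $A_2=kn$, $A_3=C_n/2$. Your added remark on the finiteness of $\Gamma$ is a harmless (and reasonable) extra observation not spelled out in the paper.
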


\begin{proof}
Theorem~\ref{thm_sp-tm-global} implies \begin{align*}
\frac{u_t}u\geq \frac{|\nabla u|^2}{u^2}-kn-\frac{C_n}{2t},\qquad x\in
M,~t\in(0,T].
\end{align*}
Use again Lemma~\ref{lemma_Harnack} to finish the proof.
\end{proof}

\subsection{Concluding remarks}

We have obtained Li-Yau gradient estimates (and as an application Harnack inequalities) for the solution of the heat equation under the Ricci-harmonic flow, under a slightly stronger assumption on the covariant derivatives of the harmonic map. The results show a similar behavior of the heat function as in the classical (non-evolving) case. In the future, one may want to study the behavior in an even more general setting, for example only a lower bound on the Ricci curvature and no condition on the covariant derivatives of the harmonic map.

\bibliographystyle{unsrt}
\bibliography{Gradient-estimates}

\end{document}